\newtheorem{theorem}{Theorem}
\newtheorem*{theorem*}{Theorem}
\newtheorem{lem}[theorem]{Lemma}
\newtheorem{prop}[theorem]{Proposition}
\theoremstyle{definition}
\newtheorem{definition}[theorem]{Definition}
\numberwithin{equation}{section}
\newtheorem*{acknowledgement}{Acknowledgement}
\DeclareMathOperator{\var}{Var}
\newcommand{\N}{\ensuremath{{\mathbb N}}}
\newcommand{\R}{\ensuremath{{\mathbb R}}}
\newcommand{\eps}{\varepsilon}
\newcommand{\I}{\mathcal{I}}
\newcommand{\dint}{\,{\rm d}}
\author{ Simon Breneis}
\date{\today}
\address[Simon Breneis]{Institut f\"ur Analysis\\
Johannes Kepler Universit\"at Linz\\
Altenbergerstrasse 69\\
4040 Linz\\
Austria}
\email{simon.breneis@jku.at}
\thanks{The author is supported by the Austrian Science Fund (FWF) Project F5513-N26, which is a part of the Special Research Program ``Quasi-Monte Carlo Methods: Theory and Applications''.}
\keywords{}
\subjclass{}
\begin{document}

\title{On variation functions and their moduli of continuity}

\begin{abstract}
We study the moduli of continuity of functions of bounded variation and of their variation functions. It is easy to see that the modulus of continuity of a function of bounded variation is always smaller or equal to the modulus of continuity of its variation function. We show that we cannot make any reasonable conclusion on the modulus of continuity of the variation function if we only know the modulus of continuity of the parent function itself. In particular, given two moduli of continuity, the first being weaker than Lipschitz continuity, we show that there exists a function of bounded variation with minimal modulus of continuity less than the first modulus of continuity, but with a variation function with minimal modulus of continuity greater than the second modulus of continuity. In particular, this negatively resolves the open problem whether the variation function of an $\alpha$-Hölder continuous function is $\alpha$-Hölder continuous.
\end{abstract}

\maketitle


\section{Preliminaries and Motivation}

We define the variation of a function $f:[a,b]\to\R$ by $$\var(f;a,b):=\sup\bigg\{\sum_{i=1}^n\big|f(x_i)-f(x_{i-1})\big|\colon a=x_0\le x_1\le\dots\le x_n = b\text{ for some } n\in\N\bigg\}.$$ If the interval $[a,b]$ is clear from the context, we also write $\var(f):=\var(f;a,b)$. If the variation $\var(f)$ is finite, we say that $f$ is of bounded variation. Functions of bounded variation were first introduced by Jordan in \cite{J1881} in the study of Fourier series. By now, they have many applications, for example in the study of Riemann-Stieltjes integrals. We also refer the interested reader to the rather recent and comprehensive book on functions of bounded variation by Appell, Bana\'{s} and Merentes \cite{A2014}.

To every function $f$ of bounded variation, we can associate its variation function $\var_f:[a,b]\to\R$ defined by $$\var_f(x):=\var(f;a,x).$$ We also say that $f$ is the parent function of $\var_f$. Functions of bounded variation and their variation functions share many regularity properties. We state the following two connections that are most relevant to our studies, for further results, we refer the reader to \cite[78]{A2014} and \cite{H1976,R1979}.

\begin{prop}\label{prop:VariationFunctionParentFunctionConnections}
Let $f:[a,b]\to\R$ be a function of bounded variation.
\begin{enumerate}
\item The function $f$ is continuous if and only if its variation function $\var_f$ is continuous.
\item The function $f$ is Lipschitz continuous if and only if its variation function $\var_f$ is Lipschitz continuous.
\end{enumerate}
\end{prop}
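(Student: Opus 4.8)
The plan is to build everything on two elementary properties of the variation. First, \emph{additivity over adjacent intervals}: for $a \le x \le y \le b$ one has $\var_f(y) - \var_f(x) = \var(f;x,y)$, which in particular shows that $\var_f$ is non-decreasing. Second, the pointwise estimate $|f(y) - f(x)| \le \var(f;x,y) = \var_f(y) - \var_f(x)$ for $x \le y$, obtained simply by using the two-point partition of $[x,y]$.

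With these in hand, three of the four implications are immediate. If $\var_f$ is continuous, resp.\ $L$-Lipschitz, then $|f(y) - f(x)| \le |\var_f(y) - \var_f(x)|$ transfers the property verbatim to $f$. Conversely, if $f$ is $L$-Lipschitz, then for any partition $x = t_0 \le \dots \le t_n = y$ of $[x,y]$ we have $\sum_{i=1}^n |f(t_i) - f(t_{i-1})| \le L\sum_{i=1}^n(t_i - t_{i-1}) = L(y-x)$; taking the supremum over all such partitions yields $\var_f(y) - \var_f(x) = \var(f;x,y) \le L(y-x)$, so $\var_f$ is $L$-Lipschitz.

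The one implication requiring a genuine argument --- and the step I expect to be the main obstacle --- is that continuity of $f$ forces continuity of $\var_f$, since here one must bound the variation over an arbitrarily short interval using only information about $f$. I would fix $x_0$ and $\eps > 0$ and establish right-continuity of $\var_f$ at $x_0$ (left-continuity being entirely symmetric). Choose a near-optimal partition $x_0 = t_0 \le \dots \le t_n = b$ of $[x_0,b]$ with $\sum_{i=1}^n |f(t_i) - f(t_{i-1})| > \var(f;x_0,b) - \eps$, and use continuity of $f$ at $x_0$ to find $x \in (x_0, t_1]$ with $|f(x) - f(x_0)| < \eps$. Comparing the refined partition $x_0 \le x \le t_1 \le \dots \le t_n$ with the original one and applying the triangle inequality on the interval $[x, t_1]$ gives $\var(f;x,b) \ge \var(f;x_0,b) - 2\eps$. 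By additivity, $0 \le \var_f(x) - \var_f(x_0) = \var(f;x_0,x) = \var(f;x_0,b) - \var(f;x,b) \le 2\eps$. Letting $\eps \to 0$ shows $\var_f(x) \to \var_f(x_0)$ as $x \to x_0^+$, and combining with the symmetric statement on the left yields continuity of $\var_f$ on $[a,b]$, completing the proof.
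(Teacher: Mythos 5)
Your proposal is correct and follows essentially the same route as the paper: the three easy implications via the two-point estimate $|f(y)-f(x)|\le \var_f(y)-\var_f(x)$ and the partition-sum bound for the Lipschitz direction, and for the one nontrivial implication a near-optimal partition of $[x_0,b]$ refined near $x_0$ together with additivity of the variation and monotonicity of $\var_f$. The only cosmetic difference is that you insert an extra point $x$ into the partition and use the triangle inequality, whereas the paper first refines so that the first partition point already lies within $\delta$ of $x_0$; the two devices are interchangeable.
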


For the convenience of the reader, we state a proof in Section \ref{sec2}. In \cite[78]{A2014}, the authors also proved that if the variation function $\var_f$ is $\alpha$-Hölder continuous, then also the parent function $f$ is $\alpha$-Hölder continuous. They noted that the inverse implication is still an open problem, see \cite[Problem 1.1 on p.100]{A2014}. We show that the inverse implication does not hold. A counterexample can be found in Section \ref{sec4}. In fact, we also show a more general statement involving moduli of continuity in Section \ref{sec3}, see Theorem \ref{thm:TheTheorem} below. Although Theorem \ref{thm:TheTheorem} implies the existence of a counterexample as in Section \ref{sec4}, we think that it is worth to work out this example in detail since it clearly shows the main ideas of the proof of the more general Theorem \ref{thm:TheTheorem}.

\begin{definition}
A continuous, increasing function $\omega:[0,\infty)\to[0,\infty)$ with $\omega(0)=0$ is called modulus of continuity.
\end{definition}

We remark that this is not the most general definition used for moduli of continuity. Often, the requirement that $\omega$ is increasing is dropped and the continuity is replaced with continuity at zero. The reason for our more restrictive definition is to achieve simpler and clearer statements and better consistency with the coming definitions. Lemma \ref{lem:MinimalModulusOfContinuityProperties} illustrates, however, that our definition is in some sense the most general one.

Moduli of continuity are usually not used by themselves. Instead, they are helpful in characterizing how continuous a given function is.

\begin{definition}
Let $\I\subset\R$ be a bounded or unbounded interval and let $f:\I\to\R$ be a function. A modulus of continuity $\omega$ is called a modulus of continuity for $f$ if for all $x,y\in \I$, we have $$\big|f(x)-f(y)\big|\le\omega\big(|x-y|\big).$$
\end{definition}

Examples of moduli of continuity are $x\mapsto Lx$ and $x\mapsto Lx^{\alpha}$ for $0<\alpha< 1$. They characterize the Lipschitz and the $\alpha$-Hölder continuous functions with Lipschitz and $\alpha$-Hölder constant $L$, respectively.

It is easy to see that given a function $f$ and two moduli of continuity $\omega_1\le\omega_2$, if $\omega_1$ is a modulus of continuity for $f$, so is $\omega_2$. In that sense, larger moduli of continuity represent weaker continuity conditions. In particular, to every continuous function we can associate its minimal modulus of continuity.

\begin{definition}
Let $\I\subset\R$ be a bounded or unbounded interval and let $f:\I\to\R$ be a continuous function. Then the minimal modulus of continuity of $f$ is defined as $$\omega_f(h):=\sup\Big\{\big|f(x)-f(y)\big|\colon x,y\in\I,\ |x-y|\le h\Big\}.$$
\end{definition}

We can now state the first, rather simple connection between functions of bounded variation, their variation functions and their moduli of continuity. The proof of this Proposition is in Section \ref{sec3}.

\begin{prop}\label{prop:VariationFunctionParentFunctionModuliOfContinuity}
Let $f:[a,b]\to\R$ be a continuous function of bounded variation. Then $\omega_f\le\omega_{\var_f}$.
\end{prop}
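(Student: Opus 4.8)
The plan is to reduce the statement to the pointwise estimate $|f(x)-f(y)|\le|\var_f(x)-\var_f(y)|$, valid for all $x,y\in[a,b]$, and then simply pass to suprema. Before doing anything, I would note that $\omega_{\var_f}$ is even well-defined: since $f$ is continuous and of bounded variation, Proposition \ref{prop:VariationFunctionParentFunctionConnections}(1) guarantees that $\var_f$ is continuous, so its minimal modulus of continuity exists.

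The heart of the argument is the additivity of the variation over adjacent intervals: for $a\le x\le y\le b$ one has $\var(f;a,y)=\var(f;a,x)+\var(f;x,y)$, which is standard (one inequality by concatenating partitions of $[a,x]$ and $[x,y]$, the other by refining an arbitrary partition of $[a,y]$ with the extra point $x$). This gives $\var_f(y)-\var_f(x)=\var(f;x,y)$. On the other hand, the trivial two-point partition $x\le y$ shows $|f(y)-f(x)|\le\var(f;x,y)$. Since $\var_f$ is increasing, $\var_f(y)-\var_f(x)=|\var_f(y)-\var_f(x)|$, and combining these observations yields $|f(x)-f(y)|\le|\var_f(x)-\var_f(y)|$ for all $x,y\in[a,b]$ (swapping the roles of $x$ and $y$ covers the case $x\ge y$).

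Finally, I would fix $h\ge 0$ and observe that for every pair $x,y\in[a,b]$ with $|x-y|\le h$ the displayed inequality gives $|f(x)-f(y)|\le|\var_f(x)-\var_f(y)|\le\omega_{\var_f}(h)$; taking the supremum over all such pairs yields $\omega_f(h)\le\omega_{\var_f}(h)$. I do not expect any genuine obstacle here: the only points needing (routine) care are the additivity of the variation and the fact that the two suprema defining $\omega_f(h)$ and $\omega_{\var_f}(h)$ range over exactly the same set of pairs $\{(x,y)\in[a,b]^2\colon|x-y|\le h\}$.
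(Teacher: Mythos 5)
Your proposal is correct and follows essentially the same route as the paper: both establish well-definedness of $\omega_{\var_f}$ via Proposition \ref{prop:VariationFunctionParentFunctionConnections}, use the additivity of variation (Lemma \ref{lem:VariationIsSummable}) to get $|f(y)-f(x)|\le\var(f;x,y)=\var_f(y)-\var_f(x)\le\omega_{\var_f}(h)$, and conclude by taking the supremum over pairs with $|x-y|\le h$. No gaps.
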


In light of Proposition \ref{prop:VariationFunctionParentFunctionConnections} and Proposition \ref{prop:VariationFunctionParentFunctionModuliOfContinuity}, one might hope that it is also possible to prove that $\omega_{\var_f}$ can be bound in terms of $\omega_f$. We show that this is not the case.

\begin{theorem}\label{thm:TheTheorem}
Let $\omega,\omega'$ be two moduli of continuity such that $$\lim_{h\to 0}\frac{\omega(h)}{h} = \infty,$$ and let $\omega'$ be bounded. Then there exists a function $f:[0,1]\to\R$ of bounded variation such that $\omega_f\le\omega$ and $\omega_{\var_f}\ge\omega'$.
\end{theorem}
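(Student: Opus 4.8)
The plan is to build $f$ as a superposition of many disjoint "bumps", each being a small triangular (or zig-zag) oscillation supported on a tiny subinterval of $[0,1]$. The key tension is that we want the total oscillation of $f$ on any interval of length $h$ to stay below $\omega(h)$, while the variation $\var_f$ picks up the *sum* of the heights of all the bumps it crosses, which we want to grow at least like $\omega'$. Since $\omega'$ is bounded, say by $M$, the total variation of $f$ will be finite (of order $M$), so $f$ really is of bounded variation; and since $\omega(h)/h \to \infty$, moduli that are much larger than Lipschitz near $0$ give us enough room to pack in many bumps without violating $\omega_f \le \omega$.

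Concretely, I would work at dyadic scales. Fix a rapidly increasing sequence $n_k$ and, at stage $k$, place a collection of $N_k$ bumps, each of width roughly $w_k$ and height roughly $a_k$, inside disjoint subintervals, arranged so that consecutive bumps are separated by flat pieces. The constraints to satisfy are: (i) \emph{parent modulus:} any window of length $h$ meets bumps whose heights sum to at most $\omega(h)$ — since a single bump of width $w_k$ contributes a jump of size $a_k \le \omega(w_k)$ and a window of length $h$ can meet at most $O(h/w_k)+1$ bumps of scale $k$, we need $a_k$ small compared to $w_k\,\omega(h)/h$-type quantities; the hypothesis $\omega(h)/h\to\infty$ is exactly what makes this feasible. (ii) \emph{variation modulus:} the interval $[0,x]$ for suitable $x$ at scale $k$ should cross enough bumps that $\var_f(x) \gtrsim \omega'(x)$; since each bump of scale $k$ contributes $2a_k$ to the variation, and we can pack about $x/(2w_k)$ of them into $[0,x]$, we need $(x/w_k)\,a_k \gtrsim \omega'(x)$, i.e. $a_k/w_k$ should be reasonably large. (iii) \emph{bounded variation:} $\sum_k N_k a_k < \infty$, which follows once the variation modulus is pinned to the bounded function $\omega'$.

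The heart of the argument is a careful bookkeeping that makes (i) and (ii) compatible simultaneously at every scale: (ii) wants $a_k/w_k$ large, (i) wants it controlled by $\omega(h)/h$ over the relevant range of $h$, and the point is that $\omega(h)/h\to\infty$ provides an unbounded budget while $\omega'$ is only a bounded target, so a suitable choice — something like $w_k$ determined by where $\omega(w_k)/w_k$ first exceeds a threshold depending on $\omega'$ at that scale — threads the needle. I would choose the scales $n_k$ (hence $w_k = 2^{-n_k}$) sufficiently sparse so that the contributions of different scales to any fixed window don't interfere, reducing the verification of $\omega_f\le\omega$ to checking each scale separately plus the monotonicity of $\omega$. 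I expect the main obstacle to be precisely this quantitative balancing: choosing the parameters $n_k, N_k, a_k$ explicitly enough that the estimate $|f(x)-f(y)| \le \omega(|x-y|)$ holds for \emph{all} pairs $x,y$ (not just same-scale ones), handling the "mixed scale" case and the small-$h$ regime where $\omega(h)$ is itself tiny. Once the construction is fixed, verifying $\omega_{\var_f}\ge\omega'$ should be comparatively routine — it just amounts to exhibiting, for each $h$, a pair $x,y$ with $|x-y|\le h$ and $\var_f(x)-\var_f(y)\ge\omega'(h)$, which one reads off directly from how many bumps lie between two well-chosen points. Throughout, Proposition \ref{prop:VariationFunctionParentFunctionModuliOfContinuity} serves as a sanity check, since the construction must (and will) respect $\omega_f \le \omega_{\var_f}$.
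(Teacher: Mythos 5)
Your overall architecture -- disjoint excursions from a common baseline, with heights and widths tuned so that a single excursion respects $\omega$ while the accumulated variation near $0$ tracks $\omega'$ -- is essentially the mechanism the paper uses. (The paper replaces $\omega'$ by a concave majorant $V$ with $\omega_V=V$, builds exactly one up--down excursion of $f$ on each interval $[x_{n+1},x_n]$ shaped like a shifted copy of $\pm V$, chooses $x_{n+1}$ greedily as the leftmost point for which the rising part obeys $\omega$, and obtains $\var_f=V$ exactly; your dyadic blocks of identical triangles are a legitimate variant, and the reduction $\omega_{\var_f}(h)\ge \var_f(h)-\var_f(0)\ge\omega'(h)$ is fine, modulo the fact that a bound $\var_f\gtrsim\omega'$ with an unspecified constant does not literally give $\omega_{\var_f}\ge\omega'$ -- you would need constant $1$, or to run the construction with a suitable multiple of $\omega'$.)

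The genuine gap is in your bookkeeping for constraint (i), which is the heart of the matter. You propose to ensure that the heights of all scale-$k$ bumps met by a window of length $h$ \emph{sum} to at most $\omega(h)$. That sum is the \emph{variation} of $f$ over the window, not its oscillation, and imposing it is self-defeating: it forces $\var(f;x,x+h)\lesssim\omega(h)$ for all $x,h$, hence $\omega_{\var_f}\lesssim\omega$, which is exactly what you must avoid whenever $\omega'$ is much larger than $\omega$ (take $\omega(h)=h\log(1/h)$ and $\omega'(h)=1/\log(1/h)$ to see it fail concretely at $h$ comparable to a block length). Relatedly, the hypothesis $\omega(h)/h\to\infty$ only helps in the regime $h\lesssim w_k$; for $h\gg w_k$ it gives nothing, so it is not "exactly what makes this feasible." What actually saves the construction is that every bump returns to the common baseline $0$: then $|f(x)-f(y)|\le\max\{f(x),f(y)\}$, and each of $f(x)$, $f(y)$ equals an increment of $f$ from a zero lying between $x$ and $y$, hence within distance $h$ and inside a \emph{single} bump. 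This reduces $\omega_f\le\omega$ (including your deferred mixed-scale case) to the one-bump slope condition $a_k\min\{1,2h/w_k\}\le\omega(h)$, i.e. $a_k/w_k\le\tfrac12\inf_{0<t\le w_k/2}\omega(t)/t$ and $a_k\le\omega(w_k/2)$; since the right-hand side tends to $\infty$ as $w_k\to0$, you can then choose $w_k$ small enough to meet the variation-density requirement $a_k/w_k\approx 2^{k}\bigl(\omega'(2^{-k+1})-\omega'(2^{-k})\bigr)$. Without replacing the sum-based constraint by this baseline-return argument, the proposal does not close.
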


The conditions in the theorem are as general as possible. The condition on $\omega$ is necessary, as otherwise $f$ is Lipschitz continuous, implying that also $\var_f$ is Lipschitz continuous by Proposition \ref{prop:VariationFunctionParentFunctionConnections}. Furthermore, since $f$ is of bounded variation, $\var_f$ is bounded. Hence, also the condition on $\omega'$ is necessary. The proof of this theorem is given in Section \ref{sec3}.

\section{Proof of Proposition \ref{prop:VariationFunctionParentFunctionConnections}} \label{sec2}

First, by a simple application of the triangle inequality, we get that finer partitions capture more of the variation of a function in the following sense.

\begin{lem}\label{lem:VariationOnFinerLaddersHigher1D}
Let $f:[a,b]\to\R$ be a function and let $$a=x_0\le x_1\le\dots\le x_n=b$$ and $$a=y_0\le y_1\le\dots\le y_m=b$$ be two partitions of the interval $[a,b]$ with $$\big\{x_0,x_1,\dots,x_n\big\}\subset\big\{y_0,y_1,\dots,y_n\big\}.$$ Then $$\sum_{i=1}^n\big|f(x_i)-f(x_{i-1})\big|\le\sum_{i=1}^m\big|f(y_i)-f(y_{i-1})\big|.$$
\end{lem}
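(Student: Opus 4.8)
The statement is purely combinatorial together with a single application of the triangle inequality, so the plan is to reduce to the case where the finer partition has exactly one extra point, and then iterate. Write $\{y_0,\dots,y_m\} = \{x_0,\dots,x_n\} \cup \{z_1,\dots,z_k\}$ where the $z_j$ are the points of the finer partition not already present in the coarser one, and induct on $k$.

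\textbf{Base case and inductive step.} If $k=0$ the two partitions coincide and there is nothing to prove. For the inductive step, suppose the claim holds whenever at most $k-1$ points are inserted, and let the finer partition contain $k$ new points. Pick one new point $z = z_k$; it lies strictly between two consecutive points of the coarser partition, say $x_{j-1} < z < x_j$ (after relabelling and discarding the degenerate case $z \in \{x_{j-1}, x_j\}$, which cannot occur since $z$ is a new point, though equal consecutive $x$'s are allowed — in that case the corresponding summand is zero and $z$ can be absorbed harmlessly). Form the intermediate partition $a = x_0 \le \dots \le x_{j-1} \le z \le x_j \le \dots \le x_n = b$. Its variation sum differs from that of the coarse partition only in that the term $|f(x_j) - f(x_{j-1})|$ is replaced by $|f(x_j) - f(z)| + |f(z) - f(x_{j-1})|$, and by the triangle inequality
$$\big|f(x_j)-f(x_{j-1})\big| \le \big|f(x_j)-f(z)\big| + \big|f(z)-f(x_{j-1})\big|,$$
so the sum over the coarse partition is at most the sum over the intermediate partition. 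The intermediate partition is contained in $\{y_0,\dots,y_m\}$ and differs from it by only $k-1$ inserted points, so the inductive hypothesis finishes the argument.

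\textbf{Main obstacle.} There is no serious obstacle; the only care needed is bookkeeping when the partitions are allowed to be non-strictly increasing (repeated points), so that one must make sure inserting a new point between consecutive — possibly equal — coarse points is always legitimate and that the index relabelling in the induction is consistent. This is routine, and I would keep it brief.
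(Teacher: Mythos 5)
Your proof is correct and follows exactly the route the paper intends: the paper states this lemma without proof, remarking only that it follows "by a simple application of the triangle inequality," and your induction on the number of inserted points, with the triangle inequality handling each single insertion, is precisely that argument made explicit. The care you take with repeated points is fine but, as you note, routine.
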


Furthermore, it is an easy exercise to show the following Lemma.

\begin{lem}\label{lem:VariationIsSummable}
Let $f:[a,b]\to\R$ be a function and let $c\in[a,b]$. Then $$\var(f;a,b)=\var(f;a,c)+\var(f;c,b).$$
\end{lem}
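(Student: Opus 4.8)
The plan is to establish the claimed additivity by proving the two inequalities $\var(f;a,b)\le\var(f;a,c)+\var(f;c,b)$ and $\var(f;a,b)\ge\var(f;a,c)+\var(f;c,b)$ separately, in each case arguing directly from the defining supremum over partitions. The degenerate cases $c=a$ and $c=b$ can be dispatched immediately, since a ``partition'' of a single point contributes nothing to the variation, so I would assume $a<c<b$ throughout.

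For the lower bound $\var(f;a,b)\ge\var(f;a,c)+\var(f;c,b)$, I would start from arbitrary partitions $a=x_0\le\dots\le x_n=c$ of $[a,c]$ and $c=z_0\le\dots\le z_m=b$ of $[c,b]$. Concatenating them produces a single partition $a=x_0\le\dots\le x_n=c=z_0\le\dots\le z_m=b$ of $[a,b]$, and since the two blocks share only the common endpoint $c$, the associated sum of absolute increments splits exactly as the sum over the first partition plus the sum over the second. By the definition of $\var(f;a,b)$ as a supremum, this combined sum is at most $\var(f;a,b)$. Holding one partition fixed and passing to the supremum over the other, then taking the supremum over the first, yields the desired inequality.

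For the upper bound, I would take an arbitrary partition $a=y_0\le\dots\le y_k=b$ of $[a,b]$ and insert the point $c$ if it is not already present, obtaining a refinement. By Lemma \ref{lem:VariationOnFinerLaddersHigher1D} the sum of absolute increments does not decrease under this refinement. The refined partition now contains $c$, so it decomposes into a partition of $[a,c]$ followed by a partition of $[c,b]$, and its increment sum equals the corresponding sum over $[a,c]$ plus the sum over $[c,b]$; these are bounded above by $\var(f;a,c)$ and $\var(f;c,b)$ respectively. Chaining these estimates shows that the sum for the original partition of $[a,b]$ is at most $\var(f;a,c)+\var(f;c,b)$, and taking the supremum over all partitions of $[a,b]$ gives the upper bound. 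Combining the two inequalities gives equality.

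I do not expect any genuine obstacle here: both directions are routine supremum manipulations. The only points requiring a little care are the bookkeeping that the increment sum splits cleanly at the shared point $c$, so that no term is double counted or dropped, and the appeal to Lemma \ref{lem:VariationOnFinerLaddersHigher1D} guaranteeing that inserting $c$ can only increase the increment sum, which is precisely what makes the upper bound valid even when $c$ lies strictly between two consecutive partition points.
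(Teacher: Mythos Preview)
Your argument is correct and is exactly the standard proof of additivity of variation. The paper does not actually supply a proof of this lemma---it merely declares it ``an easy exercise''---so there is nothing to compare against; your write-up, including the appeal to Lemma~\ref{lem:VariationOnFinerLaddersHigher1D} for the refinement step, would serve perfectly well as that exercise's solution.
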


We now give a simple proof of Proposition \ref{prop:VariationFunctionParentFunctionConnections}. This proof is basically the same as the proof in \cite[78]{A2014}.

\begin{proof}[Proof of Proposition \ref{prop:VariationFunctionParentFunctionConnections}]
1. We prove that $f$ is right-continuous if and only if $\var_f$ is right-continuous. Similarly, $f$ is left-continuous if and only if $\var_f$ is left-continuous. Together, this shows that $f$ is continuous if and only if $\var_f$ is continuous.

Let $f$ be right-continuous at $x$. Let $\eps>0$ be arbitrary and let $\delta>0$ be such that $\big|f(x) - f(x+h)\big| < \eps/2$ for all $0\le h<\delta$. Let $x=x_0\le x_1\le \dots \le x_n=b$ be a partition of $[x,b]$ such that $$\sum_{i=1}^n|f(x_i)-f(x_{i-1})|\ge \var(f;x,b) - \eps/2.$$ Using Lemma \ref{lem:VariationOnFinerLaddersHigher1D}, we can assume that $x < x_1 < x+\delta$. Hence, we have
\begin{align*}
\var(f;x,b) &\le \sum_{i=1}^n\big|f(x_i)-f(x_{i-1})\big| + \eps/2 \le \big|f(x_{i_0}) - f(x)\big| + \sum_{i=2}^n\big|f(x_i)-f(x_{i-1})\big| + \eps/2\\
&\le \eps/2 + \var(f;x_1,b) + \eps/2 = \var(f;x_1,b) + \eps.
\end{align*}
Lemma \ref{lem:VariationIsSummable} implies that 
\begin{align*}
0 &\le \var_f(x_1)-\var_f(x) = \var(f;a,x_1) - \var(f;a,x) = \var(f;x,x_1)\\
&= \var(f;x,b) - \var(f;x_1,b) \le \eps
\end{align*}
holds for all $x_1\in (x,x+h)$. Thus, $\var_f$ is right-continuous at $x$.

On the other hand, if $\var_f$ is right-continuous at $x$, then it follows from Lemma \ref{lem:VariationIsSummable} that $$\big|f(x+h)-f(x)\big| \le \var(f;x,x+h) = \var(f;a,x+h) - \var(f;a,x) = \var_f(x+h)-\var_f(x),$$ which implies that $f$ is right-continuous at $x$.

2. If $f$ is Lipschitz continuous with constant $L$ and $a\le x\le y\le b$, then it is easily verified that $$\big|\var_f(x)-\var_f(y)\big| = \var(f;x,y) \le L|x-y|.$$ Conversely, if $\var_f$ is Lipschitz continuous with constant $L$ and $a\le x\le y\le b$, then $$\big|f(y)-f(x)\big| \le \var(f;x,y) = \var_f(y)-\var_f(x) \le L|y-x|.$$
\end{proof}

\section{An Example} \label{sec4}

Before giving an example of a function that is $\alpha$-Hölder continuous with a variation function that is not $\gamma$-Hölder continuous for all $0<\gamma <1$, we need to generalize Lemma \ref{lem:VariationIsSummable} slightly. We denote by $f(x+)$ the right-sided limit of the function $f$ at $x$.

\begin{lem}\label{lem:TotalVariationSigmaSummable}
Let $f:[a,b]\to\R$ be a function with $f(a+)=f(a)$. Let $b=z_0>z_1>z_2>\dots$ be a strictly decreasing sequence in $[a,b]$ that converges to $a$. Then $$\var(f;a,b)=\sum_{n=0}^\infty\var(f;z_{n+1},z_n).$$
\end{lem}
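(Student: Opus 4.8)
The plan is to establish the two inequalities separately. For the inequality $\var(f;a,b)\ge\sum_{n=0}^\infty\var(f;z_{n+1},z_n)$, I would fix $N\in\N$ and apply Lemma \ref{lem:VariationIsSummable} finitely many times (or rather its obvious iteration) to the chain $a\le z_{N+1}\le z_N\le\dots\le z_1\le z_0=b$, obtaining $$\var(f;a,b)=\var(f;a,z_{N+1})+\sum_{n=0}^N\var(f;z_{n+1},z_n)\ge\sum_{n=0}^N\var(f;z_{n+1},z_n),$$ where I use that $\var(f;a,z_{N+1})\ge 0$. Letting $N\to\infty$ gives the desired bound, and in particular shows that the series on the right converges.

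The reverse inequality is where the hypothesis $f(a+)=f(a)$ is used. Given $\eps>0$, pick a partition $a=x_0\le x_1\le\dots\le x_k=b$ with $\sum_{i=1}^k|f(x_i)-f(x_{i-1})|\ge\var(f;a,b)-\eps/2$. Since $z_n\to a$, choose $N$ large enough that $z_N<x_1$ (assuming $x_1>a$; if $x_1=a$ we may drop it or shift), and also large enough, using $f(a+)=f(a)$, that $|f(z_N)-f(a)|<\eps/2$. Now refine the partition by inserting the point $z_N$, which by Lemma \ref{lem:VariationOnFinerLaddersHigher1D} only increases the sum; the refined partition splits as a partition of $[a,z_N]$ followed by a partition of $[z_N,b]$, so $$\var(f;a,b)-\eps/2\le\sum_{i=1}^k|f(x_i)-f(x_{i-1})|\le\var(f;a,z_N)+\var(f;z_N,b).$$ The term $\var(f;a,z_N)$ I bound crudely: any partition of $[a,z_N]$ has all its increments controlled, but in fact the cleanest route is to note $\var(f;a,z_N)=\var(f;a,b)-\var(f;z_N,b)$ by Lemma \ref{lem:VariationIsSummable}, so this line reduces to $\var(f;a,b)-\eps/2\le\var(f;a,b)$, which is vacuous. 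So instead I would argue directly: $\var(f;a,z_N)\to 0$ as $N\to\infty$, because $\var(f;a,z_N)\le\sum_{n\ge N}\var(f;z_{n+1},z_n)+|f(z_N)-f(a)|$— hmm, this still needs care.

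The honest key step, then, is the claim $\var(f;a,z_N)\to 0$ as $N\to\infty$, equivalently that the tail of the total variation vanishes; this is exactly the point where $f(a+)=f(a)$ enters (without it the claim fails, e.g.\ for a jump at $a$). To prove it, given $\eps>0$ take a partition of $[a,z_N]$ nearly realizing $\var(f;a,z_N)$; using Lemma \ref{lem:VariationOnFinerLaddersHigher1D} one may assume its first subdivision point $x_1$ satisfies $a<x_1$ and $|f(x_1)-f(a)|<\eps/2$ (possible for $N$ large by right-continuity, after possibly inserting a point close to $a$). Then $\var(f;a,z_N)\le|f(x_1)-f(a)|+\var(f;x_1,z_N)+\eps/2\le\eps+\var(f;x_1,z_N)$, and $\var(f;x_1,z_N)\le\sum_{n=0}^\infty\var(f;z_{n+1},z_n)-\sum_{n\le M}\var(f;z_{n+1},z_n)$ for appropriate $M$ once $z_M\le x_1$; this last tail is small for $N$ (hence $M$) large since the full series converges. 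Combining, $\var(f;a,z_N)\to0$. Then from $\var(f;a,b)=\var(f;a,z_N)+\sum_{n=0}^{N-1}\var(f;z_{n+1},z_n)$ (finite additivity) and letting $N\to\infty$, the right side converges to $\sum_{n=0}^\infty\var(f;z_{n+1},z_n)$, giving equality. The main obstacle is thus organizing this right-continuity argument cleanly so that the two appeals to Lemma \ref{lem:VariationOnFinerLaddersHigher1D} and to convergence of the series do not become circular; once that bookkeeping is set up, everything else is a routine application of Lemmas \ref{lem:VariationOnFinerLaddersHigher1D} and \ref{lem:VariationIsSummable}.
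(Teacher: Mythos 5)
Your proposal is correct in substance, and the first inequality is handled exactly as in the paper (finite additivity plus monotone convergence of the partial sums). For the reverse inequality your organization differs from the paper's: the paper never isolates the claim $\var(f;a,z_N)\to 0$. Instead it takes an arbitrary partition $a=x_0<x_1\le\dots\le x_n=b$, picks $k$ with $x_0<z_k<x_1$ and $|f(z_k)-f(a)|<\eps$, and bounds the first increment by $|f(x_1)-f(z_k)|+|f(z_k)-f(a)|\le\var(f;z_k,x_1)+\eps$, so the whole partition sum is at most $\var(f;z_k,b)+\eps=\sum_{n=0}^{k-1}\var(f;z_{n+1},z_n)+\eps$; taking the supremum over partitions finishes the proof. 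That version is shorter and sidesteps the convergence bookkeeping you worry about at the end. Your route via the vanishing tail $\var(f;a,z_N)\to 0$ is equally valid and arguably makes it more transparent where the hypothesis $f(a+)=f(a)$ enters: there is no circularity, since when the series diverges the first inequality already forces $\var(f;a,b)=\infty$, and when it converges your tail estimate closes the argument. One slip to fix: the bound you write for $\var(f;x_1,z_N)$, namely the tail $\sum_{n>M}\var(f;z_{n+1},z_n)$, does not follow from $z_M\le x_1\le z_N$. The correct estimate is $\var(f;x_1,z_N)\le\var(f;z_M,z_N)=\sum_{n=N}^{M-1}\var(f;z_{n+1},z_n)\le\sum_{n\ge N}\var(f;z_{n+1},z_n)$, i.e.\ the tail starting at $N$, and it is this tail that tends to zero as $N\to\infty$ (uniformly in the choice of $x_1>a$, which is what you need). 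With that index corrected, your argument goes through.
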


\begin{proof}
First, the series converges (potentially to infinity), since all the terms are non-negative. Applying Lemma \ref{lem:VariationIsSummable}, we have for $k\in\N$ that $$\var(f;a,b)\ge \var(f;z_k,z_0) = \sum_{n=0}^{k-1}\var(f;z_{n+1},z_n).$$ Taking $k$ to infinity yields $$\var(f;a,b)\ge\sum_{n=0}^\infty\var(f;z_{n+1},z_n).$$

On the other hand, let $\eps>0$ and let $a=x_0< x_1\le\dots\le x_n=b$ be a partition of $[a,b]$. Let $k\in\N$ be such that $x_0<z_k<x_1$ and $\big|f(a)-f(z_k)\big|<\eps$. Such a $k$ exists, since $z_k\to a$ and therefore, $f(z_k)\to f(a)$. Lemma \ref{lem:VariationIsSummable} yields 
\begin{align*}
\sum_{i=1}^n\big|f(x_i)-f(x_{i-1})\big| &= \sum_{i=2}^n\big|f(x_i)-f(x_{i-1})\big| + \big|f(x_1)-f(x_0)\big|\\
&\le \var(f;x_1,b)+\big|f(x_1)-f(z_k)\big| + \big|f(z_k)-f(x_0)\big|\\
&\le \var(f;x_1,b) + \var(f;z_k,x_1) + \eps = \var(f;z_k,b) + \eps\\
&= \sum_{n=0}^{k-1}\var(f;z_{n+1},z_n) + \eps \le \sum_{n=0}^\infty\var(f;z_{n+1},z_n) + \eps.
\end{align*}
Since $\eps>0$ was arbitrary, $$\sum_{i=1}^n\big|f(x_i)-f(x_{i-1})\big| \le \sum_{n=0}^\infty\var(f;z_{n+1},z_n).$$ Taking the supremum over all partitions of $[a,b]$ yields $$\var(f;a,b) \le \sum_{n=0}^\infty\var(f;z_{n+1},z_n),$$ which proves the lemma.
\end{proof}

Let $0<\alpha<1$. We construct a function $f$ that is of bounded variation and $\alpha$-Hölder continuous, such that $\var_f$ is $\gamma$-Hölder continuous for no $\gamma\in(0,1)$.

First, consider the following general example. Let $x_1>x_2>\dots >0$ be a sequence with $x_n\to 0$ and let $(y_n)$ be a sequence with $y_2>y_4>\dots>0$, $y_{2n-1}=0$ for $n\in\N$ and $y_n\to 0$. Define the function $f:[0,x_1]\to\R$ as $f(x_n)=y_n$ and interpolate linearly in between. An example of such a function is shown in the picture below.

\begin{center}
\begin{tikzpicture}[scale=1.5]
\begin{axis}[xmin=0, xmax=1, ymin=0, ymax=1.5, samples=200, xtick={0,0.08,0.1,0.12,0.18,0.24,0.32,0.4,0.5,0.6,0.8,1}, xticklabels={0,,,,$x_8$,$x_7$,$x_6$,$x_5$,$x_4$,$x_3$,$x_2$,$x_1$}, ytick={0.3162,0.4243,0.5657,0.7071,0.8944}, yticklabels={$y_{10}$,$y_8$,$y_6$,$y_4$,$y_2$}]
\addplot[red, thick, domain=0:1](x,1.25*x^0.3);
\addplot[blue, thick, domain=0.8:1](x,4.4721-4.4721*x);
\addplot[blue, thick, domain=0.6:0.8](x,4.4721*x-4.4721*0.6);
\addplot[blue, thick, domain=0.5:0.6](x,7.0711*0.6-7.0711*x);
\addplot[blue, thick, domain=0.4:0.5](x,7.0711*x-7.0711*0.4);
\addplot[blue, thick, domain=0.32:0.4](x,7.0711*0.4-7.0711*x);
\addplot[blue, thick, domain=0.24:0.32](x,7.0711*x-7.0711*0.24);
\addplot[blue, thick, domain=0.18:0.24](x,7.0711*0.24-7.0711*x);
\addplot[blue, thick, domain=0.12:0.18](x,7.0711*x-7.0711*0.12);
\addplot[blue, thick, domain=0.1:0.12](x,15.8114*0.12-15.8114*x);
\addplot[blue, thick, domain=0.08:0.1](x,15.8114*x-15.8114*0.08);
\end{axis}
\end{tikzpicture}
\end{center}

The blue graph is the function $f$ on the interval $[x_{11},x_1]$, the red graph is the function $x\mapsto x^\alpha$. The values $y_{2n}$ were chosen smaller than $x_{2n}^\alpha$ in order to ensure that $f$ is $\alpha$-Hölder continuous at $0$. It remains to choose the sequences $(x_n)$ and $(y_n)$ appropriately.

First, the variation function $\var_f$ is easy to determine. Using Lemma \ref{lem:TotalVariationSigmaSummable}, we have $$\var_f(x_{2n-1}) = \var\big(f;0,x_{2n-1}\big)= 2\sum_{k=n}^\infty y_{2k}.$$

We want that $\var_f$ is $\gamma$-Hölder continuous for no $\gamma\in(0,1)$. In order to achieve this, we can choose the sequence $(y_n)$ to be decreasing as slowly as possible. Since $f$ should be of bounded variation, however, it needs to fall faster than $n^{-1}$, as otherwise, the series diverges. Therefore, we set $$y_{2n}=\frac{1}{2n\big(\log(n+1)\big)^2}.$$ With this choice $f$ is of bounded variation since $$\var(f) = \var_f(x_1) = \sum_{n=1}^\infty\frac{1}{n\big(\log(n+1)\big)^2} <\infty.$$

Now we have to choose the sequence $(x_n)$. Its decay should be slow enough so that $f$ is $\alpha$-Hölder continuous, but fast enough so that $\var_f$ is $\gamma$-Hölder continuous for no $\gamma\in(0,1)$. We set $$x_{2n-1}=n^{-\beta}$$ for an appropriate choice of $\beta>0$ that remains to be determined, and $$x_{2n}=\frac{x_{2n-1}+x_{2n+1}}{2}.$$

First, note that $$\var_f(n^{-\beta}) = \var_f(x_{2n-1})=\sum_{k=n}^\infty\frac{1}{n\big(\log(n+1)\big)^2} \ge \int_{n+1}^\infty\frac{1}{x(\log x)^2}\dint x = \frac{1}{\log (n+1)}.$$ Therefore, for $\gamma\in(0,1)$, we have $$\sup_{x\in(0,x_1]}\frac{\var_f(x)}{x^\gamma}\ge\sup_{n\in\N}\frac{\var_f(n^{-\beta})}{n^{-\beta\gamma}}\ge\sup_{n\in\N}\frac{n^{\beta\gamma}}{\log(n+1)}=\infty,$$ since $\beta\gamma>0$. Hence, $\var_f$ is not $\gamma$-Hölder continuous regardless of our choice of $\beta>0$.

It remains to ensure that $f$ is $\alpha$-Hölder continuous. First, $f$ needs to be $\alpha$-Hölder continuous at $0$, i.e. 
\begin{align*}
\sup_{x\in(0,x_1]}\frac{f(x)}{x^\alpha} &\le \sup_{n\in\N}\frac{f(x_{2n})}{x_{2n+3}^\alpha}\le\sup_{n\in\N}\frac{\frac{1}{2n(\log(n+1))^2}}{(n+2)^{-\alpha\beta}} = \sup_{n\in\N}\frac{(n+2)^{\alpha\beta}}{2n(\log(n+1))^2} \le \sup_{n\in\N}\frac{(3n)^{\alpha\beta}}{2n(\log 2)^2}\\
&\le \frac{3^{\alpha\beta}}{2(\log 2)^2}\sup_{n\in\N}n^{\alpha\beta-1}<\infty.
\end{align*}
Therefore, we choose $\beta$ such that $0<\beta\le \alpha^{-1}$.

Second, due to the specific structure of $f$, it is apparent that
\begin{align*}
\sup_{x,y\in(0,x_1]}\frac{\big|f(x)-f(y)\big|}{|x-y|^\alpha} &= \sup_{n\in\N}\frac{f(x_{2n})}{\big(\frac{x_{2n-1}-x_{2n+1}}{2}\big)^\alpha} = \sup_{n\in\N}\frac{2^\alpha\frac{1}{2n(\log(n+1))^2}}{\big(n^{-\beta}-(n+1)^{-\beta}\big)^\alpha}\\
&\le \sup_{n\in\N}\frac{\frac{2^{\alpha-1}}{n(\log 2)^2}}{((n+1)^{-\beta-1})^\alpha} = \frac{2^{\alpha-1}}{(\log 2)^2}\sup_{n\in\N}\frac{(n+1)^{\alpha(\beta+1)}}{n}\\
&\le \frac{2^\alpha}{(\log 2)^2}\sup_{n\in\N}\frac{(n+1)^{\alpha(\beta+1)}}{n+1}\\
&\le \frac{2^\alpha}{(\log 2)^2}\sup_{n\in\N}n^{\alpha(\beta+1)-1}.
\end{align*}
The last supremum is finite if $\alpha(\beta+1)-1\le 0$, i.e. if $\beta\le \alpha^{-1}-1$. Hence, $f$ is $\alpha$-Hölder continuous if $$0<\beta\le\min\Big\{\alpha^{-1},\alpha^{-1}-1\Big\} = \alpha^{-1}-1.$$ Since $\alpha<1$, the choice of such a $\beta>0$ is possible. Therefore, the function $f$ constructed this way is $\alpha$-Hölder continuous, but $\var_f$ is $\gamma$-Hölder continuous for no $\gamma\in(0,1)$.

\section{Proof of Proposition \ref{prop:VariationFunctionParentFunctionModuliOfContinuity} and Theorem \ref{thm:TheTheorem}} \label{sec3}

First, we prove Proposition \ref{prop:VariationFunctionParentFunctionModuliOfContinuity}.

\begin{proof}[Proof of Proposition \ref{prop:VariationFunctionParentFunctionModuliOfContinuity}]
Since $f$ is continuous and of bounded variation, also $\var_f$ is continuous by Proposition \ref{prop:VariationFunctionParentFunctionConnections}. Therefore, $\omega_{\var_f}$ is well-defined. Now, for $a\le x\le y\le b$ with $y-x\le h$ we have with Lemma \ref{lem:VariationIsSummable} that $$\big|f(y)-f(x)\big|\le\var(f;x,y) = \var_f(y)-\var_f(x)\le \omega_{\var_f}(y-x)\le\omega_{\var_f}(h).$$ Taking the supremum over all $x,y$ as above yields $\omega_f(h)\le\omega_{\var_f}(h)$.
\end{proof}

The proof of Theorem \ref{thm:TheTheorem} is more involved than the example in Section \ref{sec4}. First, we show the following lemma.

\begin{lem}\label{lem:MinimalModulusOfContinuityProperties}
Let $f:[a,b]\to\R$ be a continuous function. Then $\omega_f$ is a modulus of continuity for $f$, is subadditive and satisfies $\omega_{\omega_f}=\omega_f$. Moreover, if $\omega$ is a modulus of continuity for $f$, then $\omega_f\le\omega$.
\end{lem}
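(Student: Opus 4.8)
The plan is to verify each of the four claimed properties of $\omega_f$ in turn, all of them following quickly from the definition $\omega_f(h) = \sup\{|f(x)-f(y)| : x,y\in[a,b],\ |x-y|\le h\}$ together with the finiteness and continuity of $f$ on the compact interval $[a,b]$. First I would note that $\omega_f$ is indeed a modulus of continuity in the sense of the paper: it is clearly nonnegative, nondecreasing in $h$ (larger $h$ allows more pairs), and $\omega_f(0)=0$ because $f$ is a function; continuity of $\omega_f$ at every point, and in particular at $0$, follows from subadditivity (a nondecreasing subadditive function vanishing at $0$ is continuous), so it is cleanest to prove subadditivity first and deduce continuity afterward. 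That $\omega_f$ is a modulus of continuity \emph{for} $f$ is immediate: for any $x,y$ with $|x-y|\le h$ we have $|f(x)-f(y)|\le\omega_f(|x-y|)\le\omega_f(h)$, the last step by monotonicity.

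Next I would prove subadditivity, $\omega_f(h_1+h_2)\le\omega_f(h_1)+\omega_f(h_2)$. Given $x,y\in[a,b]$ with $|x-y|\le h_1+h_2$, say $x\le y$, I would insert an intermediate point $z=\min\{x+h_1,y\}\in[a,b]$ so that $|x-z|\le h_1$ and $|z-y|\le h_2$; then $|f(x)-f(y)|\le|f(x)-f(z)|+|f(z)-f(y)|\le\omega_f(h_1)+\omega_f(h_2)$, and taking the supremum over admissible $x,y$ gives the claim. (If $h_1+h_2$ exceeds $b-a$ there is nothing to check beyond the trivial bound, since $\omega_f$ is eventually constant equal to the oscillation of $f$.) From subadditivity and monotonicity, continuity of $\omega_f$ on $[0,\infty)$ follows by the standard argument, completing the verification that $\omega_f$ is a genuine modulus of continuity.

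For the minimality statement, suppose $\omega$ is any modulus of continuity for $f$. Then for all $x,y$ with $|x-y|\le h$ we have $|f(x)-f(y)|\le\omega(|x-y|)\le\omega(h)$ by monotonicity of $\omega$; taking the supremum over such pairs yields $\omega_f(h)\le\omega(h)$. Finally, for $\omega_{\omega_f}=\omega_f$: applying the minimality just proved to the function $\omega_f$ itself (which is continuous, hence the minimal modulus $\omega_{\omega_f}$ is defined) together with the fact that $\omega_f$ is a modulus of continuity for $\omega_f$ — this is exactly the subadditivity plus the value at $0$, spelled out as $|\omega_f(s)-\omega_f(t)|\le\omega_f(|s-t|)$ — gives $\omega_{\omega_f}\le\omega_f$; conversely $\omega_{\omega_f}(h)\ge|\omega_f(h)-\omega_f(0)|=\omega_f(h)$ directly from the defining supremum. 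I do not anticipate a serious obstacle here; the only point requiring a little care is establishing the inequality $|\omega_f(s)-\omega_f(t)|\le\omega_f(|s-t|)$, i.e.\ that $\omega_f$ is its own modulus of continuity, which is where subadditivity and monotonicity must be combined (for $s\ge t$: $\omega_f(s)\le\omega_f(t)+\omega_f(s-t)$ gives the upper bound, and $\omega_f(s)\ge\omega_f(t)$ gives the lower), so I would make sure that step is written out explicitly.
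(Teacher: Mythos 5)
There is one genuine gap: you assert that continuity of $\omega_f$ at $0$ ``follows from subadditivity (a nondecreasing subadditive function vanishing at $0$ is continuous).'' That parenthetical claim is false. The function $g$ with $g(0)=0$ and $g(h)=1$ for $h>0$ is nondecreasing, subadditive and vanishes at $0$, yet is discontinuous at $0$. The correct statement is that a nondecreasing subadditive function which is in addition \emph{right-continuous at $0$} is continuous everywhere; so subadditivity propagates continuity from $0$ to all of $[0,\infty)$, but it cannot produce continuity at $0$ by itself. The missing ingredient is exactly where the hypothesis on $f$ enters: since $f$ is continuous on the compact interval $[a,b]$, it is uniformly continuous, so for every $\eps>0$ there is $\delta>0$ with $|f(x)-f(y)|\le\eps$ whenever $|x-y|\le\delta$, i.e.\ $\omega_f(\delta)\le\eps$, which gives $\omega_f(0+)=0$. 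This is precisely the step the paper's proof carries out before invoking subadditivity, and your write-up would need it too; as it stands, your argument never actually uses the continuity of $f$ to control $\omega_f$ near $0$.

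Everything else is correct and essentially matches the paper's proof: the subadditivity argument via the intermediate point $z=\min\{x+h_1,y\}$ is the same device the paper uses (just written more carefully), the lower bound $\omega_{\omega_f}(h)\ge\omega_f(h)-\omega_f(0)$ is identical, and your two genuinely cosmetic deviations --- proving minimality directly by taking suprema instead of by contradiction, and deducing $\omega_{\omega_f}\le\omega_f$ from minimality applied to $\omega_f$ rather than computing it from subadditivity --- are both fine and arguably cleaner. Once you insert the uniform-continuity step for $\omega_f(0+)=0$, the proof is complete.
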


\begin{proof}
It is obvious from the definition that $\omega_f(0)=0$ and that $\omega_f$ is increasing. Furthermore, note that $\omega_f(h)$ is finite for all $h\in[0,\infty)$. This is because $f$ is continuous on the compact set $[a,b]$, and hence bounded. We show that $\omega_f$ is subadditive. Let $s,t\ge 0$. Then 
\begin{align*}
\omega_f(s+t) &= \sup\Big\{\big|f(x)-f(y)\big|\colon x,y\in[a,b],\ |x-y|\le s+t\Big\}\\
&= \sup\Big\{\big|f(x)-f(z)+f(z)-f(y)\big|\colon x,y,z\in[a,b],\ |x-z|\le s,|z-y|\le t\Big\}\\
&\le \sup\Big\{\big|f(x)-f(z)\big|+\big|f(z)-f(y)\big|\colon x,y,z\in[a,b],\ |x-z|\le s,|z-y|\le t\Big\}\\
&\le \sup\Big\{\big|f(x)-f(z)\big|\colon x,z\in[a,b],\ |x-z|\le s\Big\}\\
&\quad + \sup\Big\{\big|f(z)-f(y)\big|\colon y,z\in[a,b],\ |y-z|\le t\Big\}\\
&= \omega_f(s)+\omega_f(t).
\end{align*}

Next, we show that $\omega_f$ is continuous at zero. Since $f$ is continuous on the compact set $[a,b]$, it is uniformly continuous. Hence, for all $\eps>0$ there exists a $\delta>0$ such that $\big|f(x)-f(y)\big|\le \eps$ for all $|x-y|\le\delta$ with $x,y\in[a,b]$. In particular, $\omega_f(\delta)\le\eps$. Since $\eps$ was arbitrary and $\omega_f$ is increasing, we have $\omega_f(0+)=\omega_f(0)=0$.

Next, we prove that $\omega_f$ is continuous everywhere. Let $t,h>0$. Since $\omega_f$ is subadditive and increasing, $$\omega_f(t)\le\omega_f(t+h)\le \omega_f(t) + \omega_f(h).$$ Taking $h$ to zero and using that $\omega_f(0+)=0$ yields that $\omega_f$ is right-continuous. The left-continuity of $\omega_f$ follows similarly from $$\omega_f(t)\le\omega_f(t-h)+\omega_f(h)\le\omega_f(t) + \omega_f(h).$$ Altogether, $\omega_f$ is continuous.

We have shown that $\omega_f$ is a modulus of continuity. Now it is trivial that $\omega_f$ is also a modulus of continuity for $f$. To show that $\omega_{\omega_f}=\omega_f$, let $h\ge 0.$ Since $\omega_f$ is increasing, 
\begin{align*}
\omega_{\omega_f}(h) &= \sup\Big\{\big|\omega_f(x)-\omega_f(y)\big|\colon x,y\ge 0,\ |x-y|\le h\Big\}\\
&= \sup\Big\{\omega_f(x+h)-\omega_f(x)\colon x\ge 0\Big\} \ge \omega_f(0+h)-\omega_f(0) = \omega_f(h).
\end{align*}
On the other hand, since $\omega_f$ is subadditive, $$\omega_{\omega_f}(h) = \sup\Big\{\omega_f(x+h)-\omega_f(x)\colon x\ge 0\Big\} \le \sup\Big\{\omega_f(x) + \omega_f(h)-\omega_f(x)\colon x\ge 0\Big\} = \omega_f(h).$$

Finally, let $\omega$ be another modulus of continuity for $f$. If there exists an $h\ge 0$ with $\omega(h)< \omega_f(h)$, then there are two points $x,y\in[a,b]$ with $|x-y|\le h$ and $\big|f(x)-f(y)\big| > \omega(h)$. Since $\omega$ is a modulus of continuity for $f$, and since $\omega$ is increasing, $$\omega(h) < \big|f(x)-f(y)\big| \le \omega\big(|x-y|\big) \le \omega(h),$$ a contradiction.
\end{proof}

We require a modulus of continuity to be increasing and continuous. However, we need additional regularity properties. The following lemmas show that we can assume those regularity properties without loss of generality.

\begin{lem}\label{lem:BoundedModuliOfContinuityHaveEventuallyConstantMajorant}
Let $\omega$ be a bounded modulus of continuity. Then there exists a modulus of continuity $\omega'\ge\omega$ with $\omega'(h)=\omega'(1)$ for all $h\ge 1$.
\end{lem}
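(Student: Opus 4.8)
The plan is to leave $\omega$ essentially unchanged on $[0,1]$ (after possibly bumping it up so that it reaches the correct height at $h=1$) and then to extend it by a constant. Concretely, set
$$M := \sup_{h\ge 0}\omega(h) = \lim_{h\to\infty}\omega(h),$$
which is finite and nonnegative since $\omega$ is bounded and increasing, and define
$$\omega'(h) := \begin{cases}\max\{\omega(h),\,Mh\} & \text{if } 0\le h\le 1,\\ M & \text{if } h>1.\end{cases}$$

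First I would check that $\omega'$ is a modulus of continuity. We have $\omega'(0)=\max\{\omega(0),0\}=0$. On $[0,1]$ the function $\omega'$ is the pointwise maximum of the two increasing functions $h\mapsto\omega(h)$ and $h\mapsto Mh$, hence increasing; since $\omega(h)\le M$ and $Mh\le M$ for $h\le 1$, we also have $\omega'(h)\le M$ there, and $\omega'(1)=\max\{\omega(1),M\}=M$ because $M\ge\omega(1)$. Thus $\omega'$ increases up to the value $M$ on $[0,1]$ and is then constantly $M$, so it is increasing on all of $[0,\infty)$. Continuity on $[0,1)$ and on $(1,\infty)$ is clear, the two pieces agree at $h=1$, so $\omega'$ is continuous everywhere.

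Next I would verify the two required properties. For $0\le h\le 1$ we have $\omega'(h)=\max\{\omega(h),Mh\}\ge\omega(h)$ by construction, and for $h>1$ we have $\omega'(h)=M\ge\omega(h)$ since $M=\sup_{t\ge0}\omega(t)$; hence $\omega'\ge\omega$ throughout. Finally, by definition $\omega'(h)=M=\omega'(1)$ for every $h\ge 1$, which is exactly the claimed property.

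There is essentially no hard step here; the only point that needs a little care is that one cannot simply take $\omega'(h)=Mh$ on $[0,1]$, because $\omega$ may grow faster than linearly near $0$ (as it does, e.g., for H\"older moduli $\omega(h)=h^\alpha$). Taking the maximum with $\omega$ itself repairs this while keeping the value at $h=1$ equal to $M$, which is what permits the continuous junction with the constant tail. An equally valid alternative would be $\omega'(h)=\omega(\min\{h,1\})+(M-\omega(1))\min\{h,1\}$, which avoids the case distinction inside the maximum at the cost of a slightly longer formula.
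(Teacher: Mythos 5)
Your proof is correct and follows essentially the same strategy as the paper: modify $\omega$ on $[0,1]$ so that it reaches $\sup\omega$ at $h=1$, then extend by that constant — indeed, the ``equally valid alternative'' you mention at the end, $\omega'(h)=\omega(\min\{h,1\})+(\sup\omega-\omega(1))\min\{h,1\}$, is exactly the paper's construction. Your pointwise-maximum variant works just as well and all the verifications are sound.
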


\begin{proof}
Clearly, the function
\[\omega'(h)=
\begin{cases}
\omega(h) + \big(\|\omega\|_\infty-\omega(1)\big)h & h\in[0,1]\\
\|\omega\|_\infty & h\in(1,\infty).
\end{cases}
\]
is a modulus of continuity, $\omega' \ge \omega$, and $\omega'(h)=\omega'(1)$ for $h\ge 1$.
\end{proof}

\begin{lem}\label{lem:ModuliOfContinuityHaveReproducingMajorant}
Let $\omega$ be a modulus of continuity with $\omega(h)=\omega(1)$ for $h\ge 1$. Then $\omega_\omega \ge \omega$, and $\omega_\omega(h) = \omega_\omega(1)$ for $h\ge 1$.
\end{lem}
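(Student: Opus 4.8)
The plan is to prove the two claims about $\omega_\omega$ separately, using the structural facts we already have. For the first claim $\omega_\omega \ge \omega$, I would argue directly from the definition of the minimal modulus of continuity: for any $h \ge 0$,
\[
\omega_\omega(h) = \sup\Big\{\big|\omega(x)-\omega(y)\big|\colon x,y\ge 0,\ |x-y|\le h\Big\}.
\]
Since $\omega$ is increasing, the differences $\omega(x)-\omega(y)$ with $x \ge y$ are all non-negative, and taking $y=0$, $x=h$ gives the single candidate $\omega(h)-\omega(0)=\omega(h)$. Hence $\omega_\omega(h)\ge\omega(h)$, with no further work needed. This is essentially the same computation as the $\ge$ direction in the proof of $\omega_{\omega_f}=\omega_f$ in Lemma~\ref{lem:MinimalModulusOfContinuityProperties}, and I'd just point to that.

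For the second claim, that $\omega_\omega$ is eventually constant with value $\omega_\omega(1)$ on $[1,\infty)$, the idea is to show $\omega_\omega(h)\le\omega_\omega(1)$ for all $h\ge 1$ (the reverse inequality $\omega_\omega(h)\ge\omega_\omega(1)$ being immediate since $\omega_\omega$ is increasing). Fix $h\ge 1$ and take any $x,y\ge 0$ with $|x-y|\le h$; say $y\le x$. I would split into cases according to where $x$ and $y$ sit relative to $1$. If both $x,y\ge 1$, then $\omega(x)=\omega(y)=\omega(1)$, so the difference is $0$. If both $x,y\le 1$, then $|x-y|\le 1$, so $|\omega(x)-\omega(y)|\le\omega_\omega(1)$ by definition. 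The remaining case is $y<1\le x$; then $\omega(x)=\omega(1)$, so $|\omega(x)-\omega(y)|=\omega(1)-\omega(y)=\omega(1)-\omega(y)$, and since $0\le y<1$ we have $|1-y|\le 1$, hence $\omega(1)-\omega(y)=|\omega(1)-\omega(y)|\le\omega_\omega(1)$. In every case the difference is at most $\omega_\omega(1)$, so taking the supremum gives $\omega_\omega(h)\le\omega_\omega(1)$.

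The only mild subtlety — and the closest thing to an obstacle — is handling the mixed case $y<1\le x$ cleanly, but it reduces to the observation that $\omega$ has already "flattened out" by the time we reach $1$, so any pair straddling $1$ has the same $\omega$-difference as the pair where $x$ is replaced by $1$, and that pair is within distance $1$. One should also note at the outset that $\omega_\omega$ is well-defined and finite: since $\omega$ is bounded (being eventually constant) and continuous, $\omega_\omega(h)<\infty$ for all $h$; this is analogous to the finiteness remark in Lemma~\ref{lem:MinimalModulusOfContinuityProperties}, except that here the domain $[0,\infty)$ is not compact, so one argues instead from boundedness of $\omega$ directly. With these pieces assembled the lemma follows in a few lines.
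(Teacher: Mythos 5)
Your proposal is correct. The first claim is handled exactly as in the paper: take $x=h$, $y=0$ in the supremum defining $\omega_\omega(h)$. For the second claim you take a genuinely different, though equally valid, route. The paper does no case analysis at all: since $\omega$ is increasing and constant on $[1,\infty)$, its range lies in $[0,\omega(1)]$, so $\omega_\omega(h)\le\omega(1)-\omega(0)=\omega(1)$ for \emph{every} $h\ge 0$, while the pair $(1,0)$ gives $\omega_\omega(h)\ge\omega(1)$ for $h\ge 1$; together these pin down $\omega_\omega(h)=\omega(1)=\omega_\omega(1)$ on $[1,\infty)$. Your argument instead reduces an arbitrary pair with $|x-y|\le h$ to a pair within distance $1$ by splitting on the position of $x,y$ relative to $1$; all three cases check out, including the mixed case $y<1\le x$, where replacing $x$ by $1$ costs nothing because $\omega(x)=\omega(1)$. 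The paper's version is a little shorter and has the side benefit of identifying the constant value explicitly as $\omega(1)$, which your proof leaves implicit (the lemma does not require it). Your observation that finiteness of $\omega_\omega$ must come from boundedness of $\omega$ rather than compactness of the domain is a legitimate point of care that the paper passes over silently.
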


\begin{proof}
First, for $h\ge 0$ we have $$\omega_\omega(h) = \sup\Big\{\big|\omega(x)-\omega(y)\big|\colon x,y\ge 0,\ |x-y|\le h\Big\} \ge \big|\omega(h)-\omega(0)\big| = \omega(h).$$

Second, notice that $0=\omega(0)\le\omega(h)\le \omega(1)$ for all $h\ge 0$, since $\omega$ is increasing. Hence, $$\omega_{\omega}(h) = \sup\Big\{\big|\omega(x)-\omega(y)\big|\colon x,y\ge 0,\ |x-y|\le h\Big\} \le \omega(1)-\omega(0) = \omega(1).$$ On the other hand, for $h\ge 1$, $$\omega_{\omega}(h) = \sup\Big\{\big|\omega(x)-\omega(y)\big|\colon x,y\ge 0, |x-y|\le h\Big\} \ge \big|\omega(1)-\omega(0)\big| = \omega(1).$$ Hence, $\omega_\omega(h)=\omega_\omega(1) = \omega(1)$ for $h\ge 1.$
\end{proof}

\begin{lem}\label{lem:ReproducingModuliOfContinuityHaveConcaveMajorant}
Let $\omega$ be a modulus of continuity that satisfies $\omega_\omega=\omega$ and $\omega(h)=\omega(1)$ for $h\ge 1$. Then there exists a concave modulus of continuity $\omega'$ with $\omega'\ge\omega$, $\omega_{\omega'}=\omega'$ and $\omega'(h)=\omega'(1)$ for $h\ge 1$.
\end{lem}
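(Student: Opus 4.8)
The plan is to take $\omega'$ to be the least concave majorant of $\omega$ on $[0,\infty)$, defined explicitly by
$$\omega'(h):=\inf\Big\{\alpha h+\beta\colon \alpha,\beta\ge 0,\ \alpha t+\beta\ge\omega(t)\text{ for all }t\ge 0\Big\}.$$
The family over which the infimum is taken is nonempty: since $\omega$ is increasing with $\omega(t)=\omega(1)$ for $t\ge 1$, the constant function $\omega(1)$ (i.e. $\alpha=0$, $\beta=\omega(1)$) is admissible. Hence $0\le\omega(h)\le\omega'(h)\le\omega(1)$ for every $h\ge 0$, so $\omega'$ is a well-defined, bounded, nonnegative majorant of $\omega$; and as a pointwise infimum of affine functions, $\omega'$ is concave. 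From $\omega'\ge\omega$ and the eventual constancy of $\omega$ one gets $\omega'(h)\ge\omega(1)$ for $h\ge 1$, which together with $\omega'\le\omega(1)$ yields $\omega'(h)=\omega(1)=\omega'(1)$ for all $h\ge 1$.

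Next I would check that $\omega'$ is a modulus of continuity. Concave functions are continuous on the interior of their domain, so $\omega'$ is continuous on $(0,\infty)$; and $\omega'$ is non-decreasing, since by the three-slopes inequality every difference quotient of the concave function $\omega'$ over a subinterval of $[0,1]$ dominates a difference quotient over $[1,\infty)$, where $\omega'$ is constant, and is therefore $\ge 0$. For continuity at $0$ (the one place where a concave function may a priori jump up from its left endpoint) I would use continuity of $\omega$ at $0$: given $\eps>0$, pick $\delta>0$ with $\omega(\delta)\le\eps$ and test the affine function $t\mapsto \omega(1)\delta^{-1}t+\eps$, which majorizes $\omega$ on all of $[0,\infty)$ (on $[0,\delta]$ because $\omega\le\omega(\delta)\le\eps$, and on $[\delta,\infty)$ because there $\omega(1)\delta^{-1}t\ge\omega(1)\ge\omega(t)$); letting $h\to 0$ gives $\limsup_{h\to 0}\omega'(h)\le\eps$, and since $\eps$ was arbitrary and $\omega'\ge 0$, we get $\omega'(0+)=0=\omega'(0)$. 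Thus $\omega'$ is a concave modulus of continuity with $\omega'\ge\omega$ and $\omega'(h)=\omega'(1)$ for $h\ge 1$.

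It remains to prove $\omega_{\omega'}=\omega'$. For any modulus of continuity $\mu$ one has, by monotonicity of $\mu$, $\omega_\mu(h)=\sup_{x\ge 0}\big(\mu(x+h)-\mu(x)\big)$, so $\omega_\mu\ge\mu$ always (take $x=0$), while $\omega_\mu\le\mu$ is exactly subadditivity of $\mu$; hence $\omega_{\omega'}=\omega'$ is equivalent to $\omega'$ being subadditive. But a concave function $g$ with $g(0)\ge 0$ is automatically subadditive: for $s,t>0$, concavity applied to the convex combinations $s=\tfrac{s}{s+t}(s+t)+\tfrac{t}{s+t}\cdot 0$ and $t=\tfrac{t}{s+t}(s+t)+\tfrac{s}{s+t}\cdot 0$ gives $g(s)\ge\tfrac{s}{s+t}g(s+t)$ and $g(t)\ge\tfrac{t}{s+t}g(s+t)$, which add to $g(s)+g(t)\ge g(s+t)$. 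Since $\omega'(0)=0$, this applies to $\omega'$ and finishes the proof.

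I expect the only genuinely delicate points to be the boundary issues in the second paragraph — continuity of the concave envelope at $0$ and its monotonicity — and both are controlled by the explicit affine majorant above together with the boundedness of $\omega'$ (equivalently, the hypothesis that $\omega$ is eventually constant). Everything else is a routine consequence of concavity.
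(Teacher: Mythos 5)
Your proposal is correct and follows essentially the same route as the paper: define $\omega'$ as the concave (affine-envelope) majorant, verify $\omega'\le\omega(1)$ via the constant admissible affine function, prove $\omega'(0+)=0$ by testing a steep affine majorant built from the continuity of $\omega$ at $0$, and deduce $\omega_{\omega'}=\omega'$ from subadditivity, which in turn follows from concavity together with $\omega'(0)=0$. The only cosmetic difference is that you impose $\alpha,\beta\ge 0$ in the defining infimum up front (which is harmless, since these constraints are automatic here) and derive monotonicity from the three-slopes inequality rather than from the sign of $\alpha$; both are fine.
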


\begin{proof}
Define $\omega'$ as the concave majorant of $\omega$, i.e. $$\omega'(h):=\inf\Big\{\alpha h + \beta\colon \alpha t + \beta \ge \omega(t)\text{ for all }t\ge 0\Big\}.$$ Clearly, $\omega'\ge \omega$. In particular, $\omega'$ is non-negative and $\omega'(h)\ge\omega(h)=\omega(1)$ for $h\ge 1.$ Also, since $\omega(1)\ge \omega(t)$ for all $t\ge 0$, $\omega'(h)\le \omega(1)$ for all $h\ge 0$. Therefore, $\omega'(h)=\omega'(1)=\omega(1)$ for all $h\ge 1$.

We show that $\omega'(0)=0$. If $\omega(h)=0$ for all $h\ge 0$, this is trivial. Otherwise, since $\omega(0+)=\omega(0)=0$, for all $\omega(1)>\eps>0$ there exists a $\delta>0$ such that $\omega(h)<\eps$ for $h\le \delta$. Define $$\alpha = \frac{\omega(1)-\eps}{\delta}.$$ Then $\alpha t + \eps\ge\omega(t)$ for all $t\ge 0.$ Since $\eps>0$ was arbitrary, $\omega'(0)=0$.

Next, we show that $\omega'$ is increasing. Since $\omega$ is non-negative, we can restrict the infimum in the definition of $\omega'$ to non-negative values of $\alpha$ (negative values of $\alpha$ lead to negative values of $\alpha t + \beta$ for $t$ sufficiently large). Let $t,h,\eps>0$ and let $\alpha\ge 0$, $\beta\in\R$ be such that $$\omega(s)\le\alpha s + \beta \text{ for all } s\ge 0$$ and $$\omega'(t+h) \ge \alpha (t+h) + \beta - \eps.$$ Then, $$\omega'(t) \le \alpha t + \beta \le \alpha (t+h) + \beta \le \omega'(t+h) + \eps.$$ Since $\eps>0$ was arbitrary, we have $\omega'(t)\le\omega'(t+h)$, and $\omega'$ is increasing.

Now we show that $\omega'$ is continuous. Let $t\ge 0$. Since $\omega'$ is concave, $$\omega'\big(\lambda t + (1-\lambda)x\big) \ge \lambda \omega'(t) + (1-\lambda)\omega'(x)$$ for $\lambda\in[0,1]$. Taking $x=0$ and letting $\lambda$ tend to one, we have $$\omega'(t-)\ge \omega'(t),$$ at least if $t\neq 0$. Since $\omega'$ is increasing, $\omega'(t-)=\omega'(t)$. On the other hand, $$\omega'(t) = \omega'\bigg(\lambda (t-\lambda) + (1-\lambda)\Big(t+\frac{\lambda^2}{1-\lambda}\Big)\bigg) \ge \lambda\omega'(t-\lambda) + (1-\lambda)\omega'\bigg(t+\frac{\lambda^2}{1-\lambda}\bigg).$$ Taking $\lambda$ to zero yields $$\omega'(t) \ge \omega'(t+).$$ Again since $\omega'$ is increasing, $$\omega'(t+) = \omega'(t) = \omega'(t-).$$ In particular, $\omega'$ is continuous.

It remains to show that $\omega_{\omega'} = \omega'.$ We show that $\omega'$ is subadditive, the proof is then analogous to the proof of Lemma \ref{lem:MinimalModulusOfContinuityProperties}.  Since $\omega'$ is concave, we have $$\omega'(\lambda x) = \omega'\big(\lambda x + (1-\lambda) 0 \big)\ge \lambda\omega'(x) + (1-\lambda)\omega'(0) = \lambda\omega'(x)$$ for $x\ge 0$, $\lambda\in[0,1]$. Let $s,t\ge 0$. Then
\begin{align*}
\omega'(s+t) &= \frac{s}{s+t}\omega'(s+t) + \frac{t}{s+t}\omega'(s+t) \le \omega'\bigg(\frac{s}{s+t}(s+t)\bigg) + \omega'\bigg(\frac{t}{s+t}(s+t)\bigg)\\
&= \omega'(s) + \omega'(t).
\end{align*}
\end{proof}

We mainly exploit the following two properties of concave functions. We state them without proof, since they are well-known and elementary.

\begin{lem}\label{lem:ConcaveFunctionDecreasingDerivative}
Let $\I$ be a bounded or unbounded interval, and let $g:\I\to\R$ be a concave function. Let $x,y,x+h,y+h\in\I$ where $x\ge y $ and $h\ge 0$. Then $$g(x+h)-g(x) \le g(y+h)-g(y).$$
\end{lem}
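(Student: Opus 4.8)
The plan is to use the standard fact that the difference quotients of a concave function are nonincreasing, realized here through a convenient convex-combination identity. Since $x\ge y$ and $h\ge 0$, I may assume $x-y+h>0$ (if $x-y+h=0$ then $x=y$ and $h=0$, and the claim reads $0\le 0$). Set $\lambda:=\frac{h}{x-y+h}\in[0,1]$, so that $1-\lambda=\frac{x-y}{x-y+h}$.

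First I would check the two algebraic identities $x=\lambda y+(1-\lambda)(x+h)$ and $y+h=(1-\lambda)y+\lambda(x+h)$; this is the only computation and it is routine. Note that both $x$ and $y+h$ thus lie between $y$ and $x+h$, hence in $\I$ (since $\I$ is an interval and $y,x+h\in\I$), so the defining inequality of concavity applies to $g$ at these points and gives
\begin{align*}
g(x) &\ge \lambda\,g(y)+(1-\lambda)\,g(x+h),\\
g(y+h) &\ge (1-\lambda)\,g(y)+\lambda\,g(x+h).
\end{align*}
Adding these, the coefficients of $g(y)$ add to $1$ and so do those of $g(x+h)$, giving $g(x)+g(y+h)\ge g(y)+g(x+h)$, which rearranges to exactly $g(x+h)-g(x)\le g(y+h)-g(y)$.

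I do not expect a genuine obstacle here: the only points requiring care are that the weights $\lambda$ and $1-\lambda$ are legitimate, i.e.\ both in $[0,1]$ — which is precisely where the hypotheses $x\ge y$ and $h\ge 0$ are used — and the degenerate case $x-y+h=0$ noted above. A case analysis based on whether the intervals $[y,y+h]$ and $[x,x+h]$ overlap would also work, but the convex-combination argument handles all configurations uniformly and avoids splitting into cases.
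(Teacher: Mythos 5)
Your argument is correct and complete: the identities $x=\lambda y+(1-\lambda)(x+h)$ and $y+h=(1-\lambda)y+\lambda(x+h)$ with $\lambda=\frac{h}{x-y+h}$ check out, the weights lie in $[0,1]$ exactly because $x\ge y$ and $h\ge 0$, and adding the two concavity inequalities yields the claim; the degenerate case $x=y$, $h=0$ is handled. The paper deliberately states this lemma without proof as a well-known elementary fact, so there is nothing to compare against; your convex-combination argument is a standard and clean way to supply the missing proof.
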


\begin{lem}\label{lem:ConcaveImpliesLipschitz}
Let $g:[0,1]\to\R$ be a concave increasing function. Then $g$ is Lipschitz continuous on all intervals $[\eps,1]$ with $\eps>0$.
\end{lem}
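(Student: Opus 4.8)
The plan is to prove the slightly more quantitative statement that for every $\eps\in(0,1]$ the function $g$ is Lipschitz on $[\eps,1]$ with constant $L_\eps:=\big(g(1)-g(0)\big)/\eps$. The cases $\eps\ge 1$ (where $[\eps,1]$ is empty or a single point) are trivial, so I would assume $0<\eps<1$ and fix arbitrary points $x,y\in[\eps,1]$; by symmetry we may take $x\le y$, and the case $x=y$ is again trivial, so assume $\eps\le x<y\le 1$.

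The heart of the argument is the standard fact that the chord slopes of a concave function are decreasing, which here I only need in the following mild form: the chord slope of $g$ over any subinterval of $[\eps,1]$ is bounded by the chord slope of $g$ over $[0,\eps]$. Concretely, a one-line application of the concavity inequality to $x=\tfrac{x}{y}\,y+(1-\tfrac{x}{y})\,0$ gives $g(y)-g(x)\le\tfrac{y-x}{y}\big(g(y)-g(0)\big)$, i.e. $\tfrac{g(y)-g(x)}{y-x}\le\tfrac{g(y)-g(0)}{y}$; and applying concavity to $\eps=\tfrac{\eps}{y}\,y+(1-\tfrac{\eps}{y})\,0$ gives $g(\eps)-g(0)\ge\tfrac{\eps}{y}\big(g(y)-g(0)\big)$, i.e. $\tfrac{g(y)-g(0)}{y}\le\tfrac{g(\eps)-g(0)}{\eps}$. (This is just the monotonicity of difference quotients that underlies Lemma \ref{lem:ConcaveFunctionDecreasingDerivative}.) Chaining the two inequalities yields
$$\frac{g(y)-g(x)}{y-x}\ \le\ \frac{g(\eps)-g(0)}{\eps}.$$
Finally I would use that $g$ is increasing: this gives $g(y)-g(x)\ge 0$, so $|g(y)-g(x)|=g(y)-g(x)$, and it gives $g(\eps)\le g(1)$, so $g(\eps)-g(0)\le g(1)-g(0)$. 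Combining with the displayed bound,
$$\big|g(y)-g(x)\big|\ \le\ \frac{g(1)-g(0)}{\eps}\,|y-x|,$$
and since $x,y\in[\eps,1]$ were arbitrary, $g$ is Lipschitz on $[\eps,1]$ with constant $L_\eps$.

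I do not expect a genuine obstacle here: the statement is elementary once one has the decreasing-chord-slope property of concave functions, and the whole proof is bookkeeping. The only points requiring a little care are justifying that chord-slope inequality cleanly from the definition of concavity (rather than quoting Lemma \ref{lem:ConcaveFunctionDecreasingDerivative}, which compares equal-length shifts and so would need an extra summation step), using monotonicity of $g$ both to pass to the absolute value and to replace $g(\eps)$ by $g(1)$, and handling the degenerate cases $x=y$ and $\eps\ge 1$ at the outset.
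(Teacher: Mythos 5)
Your proof is correct: the two concavity inequalities you write down do give the chord-slope bound $\frac{g(y)-g(x)}{y-x}\le\frac{g(\eps)-g(0)}{\eps}\le\frac{g(1)-g(0)}{\eps}$, and monotonicity handles the absolute value, so $g$ is Lipschitz on $[\eps,1]$ with constant $\big(g(1)-g(0)\big)/\eps$. The paper states this lemma without proof (calling it well-known and elementary), and your argument is exactly the standard one it has in mind, with the added care of deriving the chord-slope monotonicity directly rather than routing it through Lemma \ref{lem:ConcaveFunctionDecreasingDerivative}.
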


We can now prove Theorem \ref{thm:TheTheorem}.

\begin{proof}[Proof of Theorem \ref{thm:TheTheorem}]
Using Lemma \ref{lem:BoundedModuliOfContinuityHaveEventuallyConstantMajorant}, Lemma \ref{lem:ModuliOfContinuityHaveReproducingMajorant} and Lemma \ref{lem:ReproducingModuliOfContinuityHaveConcaveMajorant}, we can assume without loss of generality that $\omega'(h)=\omega'(1)$ for $h\ge 1$, $\omega_{\omega'} = \omega'$, and $\omega'$ is concave.

Define the function $V:[0,1]\to\R$, $V(x)=\omega'(x)$. Then $\omega_V=\omega'$. We construct a non-negative function $f$ such that $\omega$ is a modulus of continuity for $f$ and $\var_f=V$ inductively on the intervals $[x_1,x_0], [x_2,x_1],\dots$ with $x_0=1$ and $x_n\to 0$. 

Assume we have already constructed $f$ on the interval $[x_n,1]$. If $x_n=0$, we have already defined $f$ on the entire interval $[0,1]$. Otherwise, we define $x_{n+1}$ and construct $f$ on the interval $[x_{n+1},x_n]$. First, to every point $x\in[0,x_n]$, we assign a point $y_x\in[x,x_n]$ with the property that $$V(y_x) = \frac{V(x)+V(x_n)}{2}.$$ Such a point $y_x$ exists, since $V$ is increasing and continuous. Define the set $$A_{n+1}:=\Big\{x\in[0,x_n]\colon V(x+h)-V(x)\le\omega(h)\text{ for all } h\in[0,y_x-x]\Big\}.$$ Since both $V$ and $\omega$ are continuous, the set $A_{n+1}$ is closed, and thus compact. It is non-empty since $x_n\in A_{n+1}$. Therefore, $$x_{n+1}:=\inf A_{n+1}\in A_{n+1}.$$ Furthermore, we define $$y_{n+1}:=y_{x_{n+1}}.$$ Finally, we define the function $f$ on $[x_{n+1},x_n]$ as
$$f(z)=
\begin{cases}
V(z)-V(x_{n+1}) & z\in[x_{n+1},y_{n+1}]\\
V(x_n)-V(z) & z\in[y_{n+1},x_n].
\end{cases}
$$

We note some simple facts about the function $f$. We always have $f(x_n)=0$ and $$f(y_n)=\frac{V(x_{n-1})-V(x_n)}{2}.$$ Since $V$ is continuous, $f$ is continuous where it is defined. Since $V$ is increasing, $f$ is piecewise monotone; $f$ is increasing on the intervals $[x_n,y_n]$ and decreasing on the intervals $[y_{n+1},x_n]$. Since $V$ is concave, $f$ is concave on the intervals $[x_n,y_n]$ and convex on the intervals $[y_{n+1},x_n]$.

The above picture shows what $f$ might look like, at least on the interval $[x_n,x_{n-1}]$. The red function is the variation function $V$, the blue function is the parent function $f$. On the interval $[x_n,y_n]$, $f(z)=V(z)+c$, and on the interval $[y_n,x_{n-1}]$, $f(z)=-V(z)+c'$. This construction already suggests that $\var_f=V$. The constants $c$ and $c'$ are chosen such that $f(x_n)=f(x_{n-1})=0$, and the point $y_n$ is chosen such that $f$ is continuous. The point $x_n$ is chosen such that $\omega$ is a modulus of continuity for $f$ (a priori at least on the interval $[x_n,y_n]$).

\begin{figure}
\begin{tikzpicture}[scale = 1.5]
\begin{axis}[clip mode = individual, xmin = 0, xmax = 1, ymin = 0, ymax = 1, samples = 200, xtick = {0,0.2,0.4871,0.9}, xticklabels = {0,$x_n$,$y_{x_n}=y_n$,$x_{n-1}$}, ytick = {0.2508,0.4472,0.6979,0.9487}, yticklabels = {0.25,0.45,0.70,0.95}]
\addplot[red, thick, domain=0.2:0.9](x,x^0.5);
\addplot[blue, thick, domain=0.2:0.4871](x,x^0.5-0.2^0.5);
\addplot[blue, thick, domain=0.4871:0.9](x,-x^0.5+0.9^0.5);
\node at (axis cs:0.2,0) [circle, scale=0.3, draw=black!80,fill=black!80] (a) {};
\node at (axis cs:0.4871,0.2508) [circle, scale=0.3, draw=black!80,fill=black!80] (b) {};
\node at (axis cs:0.9,0) [circle, scale=0.3, draw=black!80,fill=black!80] (c) {};
\node at (axis cs:0.2,0.4472) [circle, scale=0.3, draw=black!80,fill=black!80] (d) {};
\node at (axis cs:0.4871,0.6979) [circle, scale=0.3, draw=black!80,fill=black!80] (e) {};
\node at (axis cs:0.9,0.9487) [circle, scale=0.3, draw=black!80,fill=black!80] (f) {};
\draw[] (a) -- (d) {};
\draw[] (b) -- (e) {};
\draw[] (c) -- (f) {};
\end{axis}
\end{tikzpicture}
\end{figure}

The remaining proof is split into four steps. First, we show that $(x_n)$ converges to zero. Hence, we have defined the function $f$ on the interval $(0,1]$. Second, we prove that $f(0+)=0$, and, therefore, extend $f$ continuously to $[0,1]$ with $f(0)=0$. Then, we show that $\omega_f\le\omega$ and finally, we prove that $\var_f=V$.

1. Clearly, $(x_n)$ is decreasing and bounded from below by zero. Thus, $(x_n)$ converges, say to the point $x\in[0,1]$. Assume that $x\neq 0$. Since $V$ is concave, it is Lipschitz continuous with constant $L$ on $[x/2,1]$ by Lemma \ref{lem:ConcaveImpliesLipschitz}. Since $$\lim_{h\to 0}\frac{\omega(h)}{h} = \infty,$$ there exists an $\eps>0$ such that $\omega(h)\ge Lh$ for all $h\in[0,\eps]$. Let $n\in\N$ be sufficiently large such that $0\le x-x_n\le \eps/2$. Define $z_{n+1}:=\max\big\{x/2,x_n-\eps\big\}\in\big[x/2,1\big]$. Then $$V(z_{n+1}+h)-V(z_{n+1})\le Lh\le\omega(h)$$ for $h\in[0,\eps]$. Hence, $z_{n+1}\in A_{n+1}$ and $x_{n+1}=\min A_{n+1}\le z_{n+1}<x$, a contradiction. Therefore, $(x_n)$ converges to zero. In particular, we have also shown that $(x_n)$ is strictly decreasing.

2. If the sequence $(x_n)$ is finite, this statement is trivial, since then $x_n=0$ for some $n\in\N$. If $(x_n)$ is infinite, it suffices to show that $f(y_n)$ converges to zero. Suppose this is not the case. Then there exists an $\eps>0$ such that $f(y_n)\ge \eps$ for infinitely many $n\in\N$. Let $(y_{n_k})_k$ be a subsequence of $(y_n)$ with $f(y_{n_k})\ge\eps$. Since $V$ is increasing, $$V(1)-V(0)\ge V(y_{n_1})-V(x_{n_k}) \ge \sum_{j=1}^k\Big(V(y_{n_j})-V(x_{n_j})\Big) = \sum_{j=1}^k f(y_{n_j}) \ge k\eps$$ for all $k\in\N$, a contradiction. Hence, $f(0+)=0$ and we extend $f$ continuously to $[0,1]$ with $f(0)=0$.

3. Let $h\ge 0$. We show that $\omega_f(h)\le \omega(h)$, i.e. $$\sup\Big\{\big|f(x)-f(y)\big|\colon x,y\in[0,1],\ |x-y|\le h\Big\}\le\omega(h).$$ Since $\omega$ is increasing, it suffices to show that $$\sup\Big\{\big|f(x)-f(y)\big|\colon x,y\in[0,1],\ |x-y|= h\Big\}\le\omega(h).$$ This in turn is equivalent to $$\sup\Big\{\big|f(x+h)-f(x)\big|\colon x\in[0,1-h]\Big\}\le\omega(h).$$ Let $x\in[0,1-h]$. It remains to show that $$\big|f(x+h)-f(x)\big|\le\omega(h).$$ We also write $y$ instead of $x+h$. We distinguish several different cases depending on the positions of $x$ and $y$ relative to the points $x_n$ and $y_n$. To every point $z\in(0,1]$, we can assign $n(z)\in\N$ such that $x_{n(z)}<z\le x_{n(z)-1}$. The special case $x=0$ is treated at the very end as Case 3.

Case 1. We have $n:=n(x)=n(y)$. We distinguish whether $x,y$ are in the intervals $[x_n,y_n]$ or $[y_n,x_{n-1}]$.

Case 1.1. We have $x,y\in[x_n,y_n]$. Using Lemma \ref{lem:ConcaveFunctionDecreasingDerivative},
\begin{align*}
\big|f(y)-f(x)\big| &= \Big|V(y)-V(x_n) - \big(V(x)-V(x_n)\big)\Big| = \big|V(y)-V(x)\big|\\
&= V(y)-V(x) \le V(x_n+h)-V(x_n) \le \omega(h).
\end{align*}

Case 1.2. We have $x,y\in[y_n,x_{n-1}]$. Here, we need an additional distinction on the difference $h=y-x$.

Case 1.2.1. Assume that $h\le y_n-x_n$. Using Lemma \ref{lem:ConcaveFunctionDecreasingDerivative} and Case 1.1,
\begin{align*}
\big|f(y)-f(x)\big| &= \Big|V(x_{n-1})-V(y)-\big(V(x_{n-1})-V(x)\big)\Big| = \big|V(x)-V(y)\big|\\
&= V(x+h)-V(x) \le V(x_n+h)-V(x_n) \le \omega(h).
\end{align*}

Case 1.2.2. Assume that $h\ge y_n-x_n$. Using the defining property of $y_n$,
\begin{align*}
\big|f(y)-f(x)\big| &= \Big|V(x_{n-1})-V(y)-\big(V(x_{n-1})-V(x)\big)\Big| = \big|V(x)-V(y)\big|\\
&= V(y)-V(x) \le V(x_{n-1})-V(y_n) = V(y_n)-V(x_n)\\
&\le \omega(y_n-x_n) \le \omega(h).
\end{align*}

Case 1.3. We have $x\in[x_n,y_n]$ and $y\in[y_n,x_{n-1}]$. Using the preceding cases,
\begin{align*}
\big|f(y)-f(x)\big| &\le \max\Big\{\big|f(y)-f(y_n)\big|,\big|f(x)-f(y_n)\big|\Big\}\\
&\le \max\Big\{\omega(y-y_n),\omega(y_n-x)\Big\}\\
&= \omega\Big(\max\big\{y-y_n,y_n-x\big\}\Big)\le\omega(h).
\end{align*}

Case 2. We have $m:=n(y)<n(x)=:n$. We again distinguish several different cases and reduce them all to Case 1.

Case 2.1. We have $x\in[x_n,y_n]$.

Case 2.1.1. We have $y\in[x_m,y_m]$.

Case 2.1.1.1. We have $f(x)\le f(y)$. Then,
\begin{align*}
\big|f(y)-f(x)\big| &= f(y)-f(x) \le f(y) = f(y)-f(x_m)\\
&= \big|f(y)-f(x_m)\big| \le \omega(y-x_m)\le\omega(y-x) = \omega(h).
\end{align*}

Case 2.1.1.2. We have $f(y)\le f(x)$. Then,
\begin{align*}
\big|f(y)-f(x)\big| &= f(x)-f(y) \le f(x) = f(x)-f(x_{n-1}) = \big|f(x)-f(x_{n-1})\big|\\
&\le \omega(x_{n-1}-x) \le \omega(y-x) = \omega(h).
\end{align*}

Case 2.1.2. We have $y\in[y_m,x_{m-1}]$.

Case 2.1.2.1. We have $f(x)\le f(y)$. Then,
\begin{align*}
\big|f(y)-f(x)\big| &= f(y)-f(x) \le f(y) = f(y)-f(x_m)\\
&= \big|f(y)-f(x_m)\big| \le \omega(y-x_m) \le \omega(y-x)\le\omega(h).
\end{align*}

Case 2.1.2.2. We have $f(y)\le f(x)$. Then, 
\begin{align*}
\big|f(y)-f(x)\big| &= f(x)-f(y) \le f(x) = f(x) - f(x_{n-1}) = \big|f(x)-f(x_{n-1})\big|\\
&\le \omega(x_{n-1}-x)\le\omega(y-x) = \omega(h).
\end{align*}

Case 2.2. We have $x\in[y_n,x_{n-1}]$.

Case 2.2.1. We have $y\in[x_m,y_m]$.

Case 2.2.1.1. We have $f(x)\le f(y)$. Then,
\begin{align*}
\big|f(y)-f(x)\big| &= f(y)-f(x) \le f(y) = f(y)-f(x_m)\\
&= \big|f(y)-f(x_m)\big| \le \omega(y-x_m) \le \omega(y-x) = \omega(h).
\end{align*}

Case 2.2.1.2. We have $f(y)\le f(x)$. Then,
\begin{align*}
\big|f(y)-f(x)\big| &= f(x)-f(y) \le f(x) = f(x)-f(x_{n-1}) = \big|f(x)-f(x_{n-1})\big|\\
&\le \omega(x_{n-1}-x) \le \omega(y-x) = \omega(h).
\end{align*}

Case 2.2.2. We have $y\in[y_m,x_{m-1}]$.

Case 2.2.2.1. We have $f(x)\le f(y)$. Then,
\begin{align*}
\big|f(y)-f(x)\big| &= f(y)-f(x) \le f(y) = f(y) - f(x_m)\\
&= \big|f(y)-f(x_m)\big| \le \omega(y-x_m) \le \omega(y-x) = \omega(h).
\end{align*}

Case 2.2.2.2. We have $f(y)\le f(x)$. Then,
\begin{align*}
\big|f(y)-f(x)\big| &= f(x)-f(y) \le f(x) = f(x) - f(x_{n-1}) = \big|f(x)-f(x_{n-1})\big|\\
&\le \omega(x_{n-1}-x) \le \omega(y-x) = \omega(h).
\end{align*}

Case 3. We have $x=0$. Define $n:=n(h)=n(y)$. Then, $$\big|f(y)-f(x)\big| = f(h) = f(h)-f(x_n) = \big|f(h)-f(x_n)\big|\le\omega(h-x_n)\le\omega(h).$$

4. Using Lemma \ref{lem:TotalVariationSigmaSummable} and that $f$ is continuous at zero and piecewise monotone, we have for $x\in[0,1]$ with $x_n\le x\le y_n$ that
\begin{align*}
\var_f(x) &= \var(f;0,x) = \sum_{k=n}^\infty\Big(\var\big(f;x_{k+1},y_{k+1}\big) + \var\big(f;y_{k+1},x_k\big)\Big) + \var\big(f;x_n,x\big)\\
&= \sum_{k=n}^\infty\Big(f(y_{k+1})-f(x_{k+1}) + f(y_{k+1})-f(x_k)\Big) + f(x)-f(x_n)\\
&= 2\sum_{k=n}^\infty f(y_{k+1}) + f(x) = 2\sum_{k=n}^\infty\frac{V(x_k)-V(x_{k+1})}{2} + V(x)-V(x_n)\\
&= -\lim_{k\to\infty}V(x_{k+1})+V(x_n)+V(x)-V(x_n)= V(x).
\end{align*}
Similarly, for $y_{n+1}\le x\le x_n$, we have
\begin{align*}
\var_f(x) &= \var(f;0,x)\\
&= \sum_{k=n+1}^\infty\Big(\var\big(f;x_{k+1},y_{k+1}\big) + \var\big(f;y_{k+1},x_k\big)\Big)\\
&\qquad + \var\big(f;x_{n+1},y_{n+1}\big) + \var\big(f;y_{n+1},x\big)\\
&= \sum_{k=n+1}^\infty\Big(f(y_{k+1})-f(x_{k+1}) + f(y_{k+1})-f(x_k)\Big)\\
&\qquad + f(y_{n+1}) - f(x_{n+1}) + f(y_{n+1}) - f(x)\\
&= 2\sum_{k=n}^\infty f(y_{k+1}) - f(x) = 2\sum_{k=n}^\infty\frac{V(x_k)-V(x_{k+1})}{2} - \big(V(x_n)-V(x)\big)\\
&= -\lim_{k\to\infty}V(x_{k+1})+V(x_n)-V(x_n)+V(x)= V(x).
\end{align*}
\end{proof}

\begin{acknowledgement}
This paper developed as part of the author's Master's thesis at the Johannes Kepler University in Linz, Austria. The author would like to thank his supervisor Aicke Hinrichs for proposing the problem of this paper, for his support and encouragement and his suggestions for improving the paper.
\end{acknowledgement}



\end{document}